\newtheorem{theorem}{Theorem}[section]
\newtheorem{corollary}[theorem]{Corollary}
\newtheorem{lemma}[theorem]{Lemma}
\newtheorem{conjecture}{Conjecture}[section]
\newcommand{\D}{\Delta}
\newcommand{\phibar}{\overline{\varphi}}
\def\qed{\hfill \rule{4pt}{7pt}}
\DeclareMathAlphabet{\mathpzc}{OT1}{pzc}{m}{it}
\title{\bf A note on Goldberg's conjecture on total chromatic numbers}
\author{\quad {\small Yan Cao$^{a}$}%\thanks{Email: ycao17@gsu.edu},
	\quad 	{\small Guantao Chen$^{b}$\thanks{Supported in part by NSF grant DMS-1855716}} 
	\quad {\small	Guangming Jing$^{c}$\thanks{Supported in part by NSF grant DMS-2001130}},
	\unskip\\[.5em]
	{\small $^a$ Department of Mathematics, West Virginia University,
		Morgantown, WV 26505, USA} \smallskip\\
	{\small $^b$ Department of Mathematics and Statistics, Georgia State University
		Atlanta, GA 30303, USA} \smallskip\\
	{\small $^c$ Department of Mathematics, Augusta University, Augusta, GA 30912, USA}
}
\begin{document}
\date{}
\maketitle

\begin{abstract}
Let $G=(V(G), E(G))$ be a multigraph with maximum degree $\D(G)$, chromatic index $\chi'(G)$ and total chromatic number $\chi''(G)$. The Total Coloring conjecture proposed by Behzad and Vizing, independently,  states that $\chi''(G)\leq \D(G)+\mu(G) +1$ for a multigraph $G$, where $\mu(G)$ is the multiplicity of $G$.   Moreover, 
 Goldberg conjectured that $\chi''(G)=\chi'(G)$ if $\chi'(G)\geq \Delta(G)+3$ and noticed  
the conjecture holds when $G$ is an edge-chromatic critical graph.  By assuming the Goldberg-Seymour conjecture, we show that   $\chi''(G)=\chi'(G)$  if $\chi'(G)\geq \max\{ \Delta(G)+2, |V(G)|+1\}$ in this note.  Consequently, $\chi''(G) = \chi'(G)$ if $\chi'(G) \ge \D(G) +2$ and $G$ has a spanning edge-chromatic critical subgraph. 
 
 \end{abstract}
\emph{\indent \textbf{Keywords}.} chromatic number, chromatic index,  and total chromatic number.

\section{Introduction}
Graphs in this paper may contain multiple edges but no loops. We will generally follow
 Stiebits et al. in~\cite{StiebSTF-Book} for notation and terminology. 
Let  $G=(V(G),E(G))$ be a graph with vertex set $V(G)$ and edge set $E(G)$. Denote by $\D(G)$ the maximum degree of $G$.   
 A {\it vertex $k$-coloring} of $G$  is an assignment of $k$ colors to the vertices of $G$ such that no two adjacent vertices receive the same color. The {\it chromatic number} of $G$, denoted by $\chi(G)$, is the minimum $k\geq 0$ such that $G$ admits a vertex $k$-coloring. A {\it $k$-edge-coloring} of $G$ is an assignment of $k$ colors to the edges of $G$ such that no two incident edges receive the same color. The {\it chromatic index} of $G$, denoted by $\chi'(G)$, is the minimum $k\geq 0$ such that $G$ admits a $k$-edge-coloring. A {\it total $k$-coloring} is an assignment of $k$ colors to the vertices and edges of $G$ such that no two adjacent or incident elements of $V(G)\cup E(G)$ receive the same color. The {\it total chromatic number}, denoted by $\chi''(G)$,  is the minimum $k\geq 0$ such that $G$ admits a total $k$-coloring.  Behzad (1965)~\cite{Behzad} and Vizing (1968)~\cite{vizing68}, independently,  conjectured that $\chi''(G)\leq \Delta(G)+ \mu(G) +1$ for any graph $G$. Note that $\chi'(G) \le \D(G) + \mu(G)$.  Thus a stronger version of the conjecture states that $\chi''(G) \le \chi'(G) +1$ for any graph $G$. Clearly, $\chi'(G)\geq\Delta(G)$ and 
 $\chi''(G) \ge \chi'(G)$.  There are a huge number of graphs with $\chi''(G)>\chi'(G)$. For example, all simple graphs with $\chi'(G)=\Delta(G)$ have $\chi''(G)>\chi'(G)$. A graph $G$ is called  {\em  edge-chromatic  critical } if $\chi'(H) < \chi'(G)$ for every proper subgraph $H$. There are edge-chromatic critical graphs with $\chi''(G) > \chi'(G)$. On the other hand, Goldberg~\cite{goldberg84} observed that if the Goldberg-Seymour conjecture is true, then $\chi'(G)=\chi''(G)$ for every edge-chromatic critical graph with $\chi'(G)\geq \Delta(G)+2$. Furthermore, he made the following conjecture.
  
 \begin{conjecture}[Goldberg, 1984]
 	If $\chi'(G) \ge \D(G) +3$, then $\chi''(G) = \chi'(G)$. 
 \end{conjecture}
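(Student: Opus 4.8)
The plan is to decouple the total colouring into a vertex colouring and an edge colouring and to show that the three colours of slack in the hypothesis suffice to make the two halves compatible. Put $k=\chi'(G)$ and assume $k\ge \D(G)+3$; since $\chi''(G)\ge\chi'(G)$ always holds, it is enough to exhibit a total $k$-colouring, equivalently a proper vertex $k$-colouring $c$ together with a proper $k$-edge-colouring in which no edge incident with $v$ receives the colour $c(v)$. A proper vertex colouring is essentially free: the underlying simple graph has chromatic number at most $\D(G)+1\le k-2$, so I may choose $c$ to use at most $\D(G)+1$ colours, which leaves at least two colours that are never assigned to any vertex and hence impose no constraint on the edges. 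The whole problem thus reduces to the edge-colouring half: \emph{having fixed such a $c$, find a proper $k$-edge-colouring of $G$ in which the colour $c(v)$ is missing at every vertex $v$.} In any $k$-edge-colouring each vertex misses $k-d_G(v)\ge 3$ colours, so there is slack; the content is to steer the missing set at each vertex to contain its prescribed $c(v)$, simultaneously for all $v$.

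To attack this I would recast it as a precolouring–extension problem: attach to each vertex $v$ a pendant edge $e_v$ and pre-colour it $c(v)$, obtaining a multigraph $G^{+}$ with $\D(G^{+})=\D(G)+1\le k-2$ in which the pendants form a matching onto new leaves; the target edge colouring of $G$ is exactly a $k$-edge-colouring of $G^{+}$ extending this precolouring. To locate where the prescribed colours can be realised I would invoke the Goldberg–Seymour conjecture (now a theorem): because $k=\chi'(G)\ge\D(G)+2$, the chromatic index is certified by a dense subgraph $H$, with $k=\lceil 2|E(H)|/(|V(H)|-1)\rceil$ and $|V(H)|$ odd, so colours are scarce only inside $V(H)$, while away from $H$ the abundance of missing colours (and the two vertex-free colours isolated above) gives ample room. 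On the dense core I would repair a generic $k$-edge-colouring by local moves — Vizing fans and Kempe $(\alpha,\beta)$-chain interchanges — each designed to free the colour $c(v)$ at a single vertex $v$ while preserving properness; the three guaranteed missing colours are exactly what make such a local interchange available at each step.

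The step I expect to be the real obstacle is performing these recolourings \emph{simultaneously and consistently} at all vertices. A Kempe interchange that frees $c(u)$ at $u$ can re-occupy $c(v)$ at a neighbour $v$, so the local repairs interfere, and precolouring extension for edge colouring is hard in general; one therefore cannot hope for a purely greedy fix and must exploit the special structure — a matching precolouring with exactly one forbidden colour per vertex, three colours of slack, and the density information from Goldberg–Seymour. The decisive lemma I would aim for is a simultaneous edge-recolouring statement of the form: whenever $k\ge\D(G)+3$, a $k$-edge-colouring of the dense core can be chosen so that every vertex of $H$ misses its prescribed colour at once, after which the sparse remainder extends greedily. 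Establishing that such a globally consistent recolouring always exists with only three spare colours — rather than with the additional room one would gain by also assuming $\chi'(G)\ge |V(G)|+1$, where the per-vertex freedom becomes large enough to force the extension almost directly — is precisely the heart of the conjecture and the point at which the argument is hardest.
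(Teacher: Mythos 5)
You have not proved the statement, and neither does the paper: the statement is Goldberg's conjecture itself, which the paper presents as open (the authors remark that no progress toward it is known) and for which it establishes only the special case $\chi'(G)\ge\max\{\Delta(G)+2,\,|V(G)|+1\}$ (Theorem~1.2). Your proposal is a program rather than a proof, and its ``decisive lemma'' --- a $k$-edge-colouring in which every vertex simultaneously misses its prescribed colour $c(v)$ --- is, via your own pendant-edge reformulation, essentially a restatement of the conjecture; nothing in the sketch supplies it, and the tools you name (Vizing fans, Kempe interchanges) are precisely the tools whose non-local interference you yourself identify as the obstruction. There is also a quantifier slip that makes your target formally harder than the conjecture: you fix the vertex colouring $c$ first and then demand an edge colouring avoiding $c(v)$ at every $v$, whereas a total $k$-colouring only requires that \emph{some} compatible pair exists; for a fixed adversarial $c$ the required edge colouring need not exist, so at minimum $c$ must be allowed to vary with the edge colouring. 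Finally, the claim that extension is ``greedy'' away from the Goldberg--Seymour witness $H$ is unsupported: vertices outside $H$ may still have degree $\Delta(G)$ and hence only the same three spare colours, and Kempe chains do not respect the boundary of $H$.

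It is instructive to contrast this with how the paper obtains its partial result, because the paper inverts your decomposition: instead of fixing $c$ and steering the edge colouring, it fixes an edge colouring $\varphi$ and lets $\varphi$ dictate $c$. Under the stronger hypothesis $k\ge\max\{\Delta(G)+2,\,|V(G)|+1\}$, Lemma~3.4 embeds $G$ in a $k$-dense supergraph $G'$ with $\chi'(G')=k$ and $\Delta(G')\le k-1$ (using Goldberg--Seymour together with the disjointness of maximal $k$-dense subgraphs, Lemma~3.2); in a $k$-dense graph every colour class of a $k$-edge-colouring is a near-perfect matching, so $G'$ is elementary (Lemma~2.4), meaning the missing-colour sets $\phibar(v)$ are pairwise disjoint. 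Each vertex may then take an \emph{arbitrary} colour from its nonempty missing set and the result is automatically a total $k$-colouring --- no simultaneous recolouring problem ever arises. The price is exactly the hypothesis $k\ge|V(G)|+1$, which makes the dense embedding possible; removing it is the open content of the conjecture, and your sketch does not close that gap either.
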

 It is worth mentioning that the above conjecture does not hold for all edge-chromatic critical graphs. In fact all odd cycles with length not a multiple of $3$ are edge chromatic critical and $\chi''(G)=\chi'(G)+1$. 
 As we know so far there is no progress toward the above Goldberg's conjecture. In this paper, we prove the following result and hope it will shed some light on attacking this conjecture. 
\begin{theorem}\label{thm-main}
 If $G$ is a graph satisfying   $\chi'(G)\geq \max\{\Delta(G)+2, |V(G)|+1\}$, then $\chi''(G)=\chi'(G)$.
\end{theorem}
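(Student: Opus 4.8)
The plan is to reduce the construction of a total $\chi'(G)$-coloring of $G$ to the construction of an ordinary edge-coloring of a slightly larger graph. Write $k=\chi'(G)$ and let $\widehat{G}$ be obtained from $G$ by adding one new vertex $z$ joined to every vertex of $G$ by a single edge. A proper $k$-edge-coloring $\psi$ of $\widehat{G}$ induces a total $k$-coloring of $G$: color each edge $e\in E(G)$ by $\psi(e)$ and each vertex $v\in V(G)$ by $\psi(zv)$. Indeed, two incident edges of $G$ get distinct colors because they are incident in $\widehat{G}$; a vertex $v$ and an incident edge $vu$ get distinct colors because $zv$ and $vu$ meet at $v$; and two adjacent vertices $u,v$ get distinct colors because $zu$ and $zv$ meet at $z$. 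Hence $\chi''(G)\le \chi'(\widehat{G})$, and since $G\subseteq\widehat{G}$ forces $\chi'(\widehat{G})\ge\chi'(G)=k$, it suffices to prove $\chi'(\widehat{G})\le k$; this together with $\chi''(G)\ge\chi'(G)$ yields the theorem.

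To bound $\chi'(\widehat{G})$ I would invoke the Goldberg--Seymour conjecture, which gives $\chi'(\widehat{G})\le\max\{\Delta(\widehat{G})+1,\ \Gamma(\widehat{G})\}$, where $\Gamma(H)=\max\{\lceil |E(H[U])|/\lfloor |U|/2\rfloor\rceil : U\subseteq V(H),\ |U|\ge 2\}$ is the density parameter. For the degree term, $z$ has degree $|V(G)|$ and every original vertex has degree at most $\Delta(G)+1$, so $\Delta(\widehat{G})+1=\max\{|V(G)|+1,\ \Delta(G)+2\}\le k$ by hypothesis. The whole argument therefore reduces to showing $\Gamma(\widehat{G})\le k$, i.e.\ $|E(\widehat{G}[U])|\le k\lfloor |U|/2\rfloor$ for every $U\subseteq V(\widehat{G})$.

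This density estimate is the crux, and it is the one delicate step. If $z\notin U$ then $\widehat{G}[U]=G[U]$, and the inequality $|E(G[U])|\le k\lfloor|U|/2\rfloor$ is just the standard (unconditional) matching bound $\Gamma(G)\le\chi'(G)=k$, since no color class can contain more than $\lfloor|U|/2\rfloor$ edges inside $U$. If $z\in U$, write $U=\{z\}\cup U'$ with $U'\subseteq V(G)$; then $|E(\widehat{G}[U])|=|E(G[U'])|+|U'|$, and using $2|E(G[U'])|\le\sum_{v\in U'}d_G(v)\le|U'|\Delta(G)\le|U'|(k-2)$ one gets $|E(G[U'])|+|U'|\le \tfrac12|U'|k\le k\lfloor|U|/2\rfloor$. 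Thus $\Gamma(\widehat{G})\le k$, and combining with the degree bound and Goldberg--Seymour gives $\chi'(\widehat{G})\le k$, which finishes the proof. The case $z\in U$ is exactly where the absorption of the $|U'|$ new edges at $z$ forces the hypothesis $\Delta(G)+2$ rather than $\Delta(G)+1$, while the role of $|V(G)|+1\le k$ is only to keep $\deg(z)$ below $k$.
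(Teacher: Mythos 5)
Your proof is correct, but it follows a genuinely different route from the paper. The paper proceeds structurally: it first proves that any two maximal $k$-dense subgraphs are vertex-disjoint (Lemma~\ref{density1}), then uses an edge-extremal argument with a four-case analysis (Lemma~\ref{3.3}) to embed $G$ in a $k$-dense supergraph $G'$ with $\chi'(G')=k$ and $\Delta(G')\le k-1$, and finally invokes the fact that $k$-dense graphs are elementary under any $k$-edge-coloring (Lemma~\ref{elementary}) to assign each vertex a missing color (Lemma~\ref{3.1}). You instead use the classical reduction $\chi''(G)\le\chi'(G\vee K_1)$: adding a universal vertex $z$ and edge-coloring $\widehat{G}=G\vee K_1$ with $k$ colors forces the color of $zv$ to act as a ``missing color'' at $v$, distinct across vertices because these edges meet at $z$. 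The whole theorem then reduces to $\chi'(\widehat{G})\le k$, which you get from the Goldberg--Seymour theorem with a degree check ($\Delta(\widehat{G})+1=\max\{|V(G)|+1,\Delta(G)+2\}\le k$, exactly the hypothesis) and a two-case density check; your estimate $|E(G[U'])|+|U'|\le \tfrac12 k|U'|\le k\lfloor|U|/2\rfloor$ for sets containing $z$ is correct, and since a density violation under Goldberg--Seymour requires a \emph{strict} inequality on an odd set, the non-strict bound suffices. Your argument is considerably shorter, avoids all the structural lemmas, and makes transparent where each hypothesis enters. What the paper's longer construction buys is the extra structure the authors explicitly hope to exploit later: their $G'$ is $k$-dense, hence elementary \emph{and strongly closed} under every $k$-edge-coloring, whereas your $\widehat{G}$ carries no such structure.
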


\section{Preliminaries}
Let $G$ be a graph and $\varphi$ be an edge $k$-coloring of  $G$. 
Since each color class is a matching of $G$,  we have $|E(H)|\leq k \lfloor|V(H)|/2\rfloor$ for any $H\subseteq G$. 
 Therefore for an arbitrary graph $G$, apart from the maximum degree there is another trivial lower bound for the chromatic index: 
  \[
  \chi'(G)\geq \rho(G)=\max\left\{  \frac{ 2|E(H)| }{|V(H)|-1 } : H \subseteq G, |V(H)|\geq3~odd\right\}. 
 \] 
 We call $\rho(G)$  the {\em density} of $G$.
 In the 1970s, Goldberg~\cite{Goldberg}, Gupta~\cite{10.1007/BFb0070378}, and Seymour~\cite{Seymour} independently conjectured that 
for any graph $G$, if $\chi'(G)>\Delta(G)+1$ then $\chi'(G)=\lceil \rho(G)\rceil$. This conjecture is known as the Goldberg-Seymoure conjecture.
In joint work with Wenan Zang, two authors of this paper, Guantao Chen and Guangming Jing~\cite{chen2019proof} gave a proof of the Goldberg-Seymour conjecture.  We assume that the Goldberg-Seymour conjecture is true in this paper.  

Let $G$ be a graph with $\chi'(G)=k$ and $\varphi$ be a $k$-edge-coloring of $G$.  We in this paper always assume that the set of $k$ colors is $[k] = \{1, 2, \dots, k\}$.   For a vertex $v\in V(G)$, denote by $\varphi(v)$ the set of colors assigned to edges incident with $v$ and 
$\phibar(v)$ the sets of colors not-assigned  to edges incident with $v$, i.e., $\phibar(v) = [k] -\varphi(v)$.  Let $W$ be a vertex set of $G$ and  $\partial(W)$ be the set of {\em boundary edges} of $W$, i.e., edges with exact one end in $W$. Let  $\phibar(W)=\cup_{v\in W}\phibar(v)$ and $\varphi(\partial(W))=\cup_{e\in\partial(W)}\varphi(e)$.
We call $W$ {\em elementary} if $\phibar(u)\cap\phibar(v)=\emptyset$ for any distinct vertices $u,v\in W$, and {\em closed} if $\phibar(W) \cap \varphi(\partial(W)) = \emptyset$.   Moreover, 
we call a closed set $W$ {\em strongly closed} if additionally no two edges in $\partial(W)$ assigned the same color.   For a subgraph $H$ of $G$, we call $H$ {\em elementary} and {\em closed} ({\em strongly closed}) if $V(H)$ is elementary and closed (strongly closed),   and denote $\phibar(V(H))$ and $\partial(V(H))$ by $\phibar(H)$ and $\partial(H)$, respectively.  
In terms of critical subgraphs, the Goldberg-Seymour conjecture can be stated as follows: 
\begin{theorem}\label{thm-CJZ}\cite{StiebSTF-Book}
If $G$ is a critical graph with $\chi'(G) \ge \D(G) +2$, then for any edge $e$ there is a $(\chi'(G)-1)$-edge-coloring $\varphi$ such that $G-e$ is elementary. 
\end{theorem}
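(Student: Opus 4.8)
The plan is to prove the statement by a short counting argument, using the Goldberg--Seymour conjecture only to pin down the density and the parity of $|V(G)|$; once these are secured the conclusion follows from a pigeonhole pinch, and in fact \emph{every} $(\chi'(G)-1)$-edge-coloring of $G-e$ will turn out to witness elementarity, which is stronger than the existence claimed. Throughout write $k=\chi'(G)$ and $n=|V(G)|$.

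First I would determine the global structure forced by criticality. Since $k\ge\D(G)+2>\D(G)+1$, the Goldberg--Seymour conjecture gives $k=\lceil\rho(G)\rceil$, so $\rho(G)>k-1$. Let $H\subseteq G$ be a subgraph with $|V(H)|\ge 3$ odd that attains the maximum in the definition of $\rho(G)$, so $2|E(H)|>(k-1)(|V(H)|-1)$. As the density is a lower bound for the chromatic index,
\begin{equation*}
\chi'(H)\ge \frac{2|E(H)|}{|V(H)|-1}>k-1,
\end{equation*}
whence $\chi'(H)\ge k$; but $H\subseteq G$ forces $\chi'(H)\le k$, so $\chi'(H)=k=\chi'(G)$. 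Because $G$ is critical, no proper subgraph has chromatic index $k$, so $H=G$. Consequently $n$ is odd and $2|E(G)|>(k-1)(n-1)$; since both sides are even this sharpens to
\begin{equation*}
2|E(G)|\ge (k-1)(n-1)+2.
\end{equation*}

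Next I would run the counting. As $G$ is critical, $G-e$ is $(k-1)$-edge-colorable; fix any such $\varphi$ with color set $[k-1]$ and $\phibar(v)=[k-1]-\varphi(v)$. Summing degrees and using $|E(G-e)|=|E(G)|-1$,
\begin{equation*}
\sum_{v\in V(G)}|\phibar(v)|=(k-1)n-2|E(G)|+2\le k-1,
\end{equation*}
where the last inequality is exactly the density bound above. On the other hand each color class of $\varphi$ is a matching, hence saturates an even number of vertices, so it covers at most $n-1$ of the $n$ (odd) vertices and therefore misses at least one; thus every color of $[k-1]$ lies in some $\phibar(v)$, i.e. $\bigcup_{v\in V(G)}\phibar(v)=[k-1]$. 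Combining with the trivial union bound,
\begin{equation*}
k-1=\Big|\bigcup_{v\in V(G)}\phibar(v)\Big|\le \sum_{v\in V(G)}|\phibar(v)|\le k-1 ,
\end{equation*}
so equality holds throughout and the sets $\phibar(v)$, $v\in V(G)$, are pairwise disjoint. Hence $V(G)=V(G-e)$ is elementary with respect to $\varphi$, which is the assertion.

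The main obstacle is the first step: recognizing that criticality forces the densest odd subgraph to coincide with all of $G$, so that the density inequality holds globally and, crucially, $n$ is odd. Everything afterwards is the pigeonhole pinch between the degree-sum identity and the parity fact that a matching cannot saturate an odd vertex set. I would note in passing that the two inequalities together yield $2|E(G)|=(k-1)(n-1)+2$ exactly, and that the argument proves the stronger statement that every $(k-1)$-edge-coloring of $G-e$ already makes $G-e$ elementary, so no special choice of coloring is needed.
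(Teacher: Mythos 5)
Your proof is correct, and it is worth noting that the paper itself offers no proof of this statement: Theorem~\ref{thm-CJZ} is cited from Stiebitz et al.\ and presented as a known reformulation of the Goldberg--Seymour conjecture in terms of critical graphs, under the paper's standing assumption that the conjecture holds. Your argument supplies exactly the derivation that the citation stands in for, and every step checks out: Goldberg--Seymour gives $\rho(G)>\chi'(G)-1$; criticality forces any density-attaining odd subgraph $H$ (which satisfies $\chi'(H)=\chi'(G)$) to be all of $G$, so $n=|V(G)|$ is odd and, by the parity sharpening (valid since $n-1$ is even), $2|E(G)|\ge(\chi'(G)-1)(n-1)+2$; criticality also yields the $(\chi'(G)-1)$-edge-coloring of $G-e$; and the pinch between $\sum_{v}|\overline{\varphi}(v)|\le \chi'(G)-1$ and the parity fact that each of the $\chi'(G)-1$ color classes, being a matching on an odd vertex set, misses some vertex forces the missing-color sets to be pairwise disjoint. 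Your conclusion is in fact stronger than the stated theorem: \emph{every} $(\chi'(G)-1)$-edge-coloring of $G-e$ makes $G-e$ elementary, and your two bounds combine to give the exact count $2|E(G)|=(\chi'(G)-1)(n-1)+2$, which matches the known characterization of elementary critical graphs in the cited book. One minor point you use implicitly: the maximum defining $\rho(G)$ ranges over a nonempty family, i.e.\ $|V(G)|\ge 3$; this is harmless, since $\chi'(G)\ge\Delta(G)+2$ rules out graphs on at most two vertices.
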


As a consequence of  Theorems~\ref{thm-main} and \ref{thm-CJZ}, we obtain the following result. 
\begin{corollary}\label{cor-main}
Let $G$ be a graph with $\chi'(G) \ge \D(G) +2$ and $H$ be  a critical edge-chromatic subgraph 
  with $\chi'(H) = \chi'(G)$. If $|V(H)| \ge (|V(G)| -2)/(\chi'(G) -\D(G) -1)$, then $\chi''(G) = \chi'(G)$. 
  \end{corollary}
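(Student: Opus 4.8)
The plan is to derive Corollary~\ref{cor-main} by combining Theorem~\ref{thm-main} with the structural statement of Theorem~\ref{thm-CJZ}. Theorem~\ref{thm-main} guarantees $\chi''(G)=\chi'(G)$ as soon as $\chi'(G)\geq\max\{\Delta(G)+2,\,|V(G)|+1\}$. Since we are already given $\chi'(G)\geq\Delta(G)+2$, the only missing ingredient is the inequality $\chi'(G)\geq|V(G)|+1$, equivalently $|V(G)|\leq\chi'(G)-1$. So the real work is to show that the hypothesis on $|V(H)|$ forces $G$ to have few enough vertices relative to $\chi'(G)$, and then invoke Theorem~\ref{thm-main} as a black box.

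First I would analyze the critical subgraph $H$. Because $H$ is edge-chromatic critical with $\chi'(H)=\chi'(G)\geq\Delta(G)+2\geq\Delta(H)+2$, Theorem~\ref{thm-CJZ} applies: for a suitable edge $e$ there is a $(\chi'(H)-1)$-edge-coloring $\varphi$ of $H-e$ under which $H-e$ is elementary. Elementariness means the sets $\phibar(v)$ of missing colors, taken over $v\in V(H)$, are pairwise disjoint. Let $k=\chi'(G)=\chi'(H)$, so $\varphi$ uses $k-1$ colors. The key counting step is a degree--missing-color identity: for each vertex $v\in V(H)$ the number of missing colors at $v$ in the $(k-1)$-coloring is $(k-1)-d_{H-e}(v)$, and summing the pairwise-disjoint missing sets over all of $V(H)$ gives $\sum_{v\in V(H)}\bigl((k-1)-d_{H-e}(v)\bigr)\leq k-1$, since all these colors live in a palette of size $k-1$. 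I would rearrange this to bound $|V(H)|$ from above in terms of $k$ and $\Delta(H)\leq\Delta(G)$, obtaining an inequality of roughly the form $(k-1)|V(H)|-2|E(H-e)|\leq k-1$, and then use $d_{H-e}(v)\leq\Delta(G)$ (with a small correction at the two ends of $e$) to get $|V(H)|\bigl(k-1-\Delta(G)\bigr)\lesssim k-1$. Since $k-1-\Delta(G)=\chi'(G)-1-\Delta(G)\geq1$, this yields a genuine upper bound on $|V(H)|$; matching constants against the corollary's hypothesis $|V(H)|\geq(|V(G)|-2)/(\chi'(G)-\Delta(G)-1)$ is what pins down the precise form.

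The decisive move is to combine the lower bound on $|V(H)|$ assumed in the corollary with the upper bound just derived. The assumed inequality rearranges to $|V(G)|-2\leq|V(H)|\,(\chi'(G)-\Delta(G)-1)$, and the elementariness count should give exactly $|V(H)|\,(\chi'(G)-\Delta(G)-1)\leq\chi'(G)-1$ (up to the additive constants contributed by the deleted edge $e$ and the floor in the density bound). Chaining these produces $|V(G)|-2\leq\chi'(G)-1$, i.e. $|V(G)|\leq\chi'(G)+1$; tightening the bookkeeping to shave off the extra constant should deliver $|V(G)|+1\leq\chi'(G)$, exactly the second clause of the hypothesis of Theorem~\ref{thm-main}. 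With both $\chi'(G)\geq\Delta(G)+2$ and $\chi'(G)\geq|V(G)|+1$ in hand, Theorem~\ref{thm-main} immediately gives $\chi''(G)=\chi'(G)$.

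The main obstacle I anticipate is getting the additive constants exactly right so that the two inequalities interlock to yield $|V(G)|+1\leq\chi'(G)$ rather than merely $|V(G)|\leq\chi'(G)+1$ or some similarly off-by-one weaker bound. The coloring furnished by Theorem~\ref{thm-CJZ} is a $(k-1)$-coloring of $H-e$, not of $H$, so I must track how deleting $e$ alters the degree sum and the missing-color count at the two endpoints of $e$, and account for the odd-order floor implicit in the density bound $\rho$. Care is also needed because $H$ is a subgraph of $G$, so $V(H)$ and $V(G)$ can differ and the passage from a bound on $|V(H)|$ to one on $|V(G)|$ relies crucially on the specific coefficient $\chi'(G)-\Delta(G)-1$ appearing in the hypothesis. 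Once these constants are reconciled, the rest is a direct appeal to the two theorems.
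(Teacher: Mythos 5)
Your proposal is correct and follows essentially the same route as the paper's own proof: apply Theorem~\ref{thm-CJZ} to an edge $e$ of $H$, sum the pairwise-disjoint missing-color sets of the $(\chi'(G)-1)$-edge-coloring of $H-e$ against the palette size $\chi'(G)-1$, bound degrees by $\Delta(G)$, chain with the hypothesis on $|V(H)|$ to obtain $\chi'(G)\geq|V(G)|+1$, and then invoke Theorem~\ref{thm-main}. The bookkeeping you flagged closes exactly as you anticipated: the two ends of the uncolored edge $e$ each contribute one extra missing color, giving $(\chi'(G)-\Delta(G)-1)|V(H)|+2\leq\chi'(G)-1$, which chained with $|V(G)|-2\leq(\chi'(G)-\Delta(G)-1)|V(H)|$ yields $|V(G)|\leq\chi'(G)-1$, i.e.\ $\chi'(G)\geq|V(G)|+1$ as required (the density floor you worried about plays no role).
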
  
\proof Let $G$ and $H$ be defined as the above, $e\in E(H)$ and $\varphi$ be an edge $(\chi'(G)-1)$-coloring of $H-e$. By Theorem~\ref{thm-CJZ}, $H-e$ is elementary under $\varphi$. Thus $\chi'(G)-1\geq\sum_{v\in V(H)}|\phibar(v)|$. Note that we have $|\phibar(v)|=\chi'(G)-1-(d_{H}(v)-1)$ if $v$ is an end-vertex of the uncolored edge $e$, and $|\phibar(v)|=\chi'(G)-1-d_{H}(v)$ otherwise.  Hence we have the following inequality:

\begin{equation*}
\begin{split}
&\chi'(G) -1 \\
%\ge &\sum_{v\in V(H)}|\phibar(v)|\\
\geq&2+\sum_{v\in V(H)}(\chi'(G)-1-d_H(v))\\
\geq&(\chi'(G) -\D(G) -1) |V(H)| +2 \\
\geq&|V(G)|.
\end{split}
\end{equation*} 

By Theorem~\ref{thm-main}, we have $\chi''(G) = \chi'(G)$.  \qed

As a result of Corollary~\ref{cor-main}, we have $\chi''(G) = \chi'(G)$ if $\chi'(G) \ge \D(G) +2$ and $G$ has a spanning critical subgraph.

Given a graph $G$,  a subgraph $H$ is called {\em $k$-dense} if $|V(H)|$ is odd, $|V(H)|\geq 3$ and $2|E(H)|=k(|V(H)|-1)$.   Moreover, 
 $H$ is called {\em maximal $k$-dense} 
 if there does not exist another $k$-dense subgraph $H'$ containing $H$ as a proper subgraph.  
%The following Lemma is shown in book~\cite{StiebSTF-Book} assuming the Goldberg-Seymour conjecture.

\begin{lemma}\label{elementary}
Let $G$ be a graph, $\varphi$ be a $\chi'(G)$-edge-coloring of $G$ and $H$ be a $\chi'(G)$-dense subgraph of $G$. Then $H$ is elementary and strongly closed under $\varphi$.	
\end{lemma}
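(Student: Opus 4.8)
The plan is to exploit the density equality $2|E(H)| = k(n-1)$, where $k := \chi'(G)$ and $n := |V(H)|$, through a counting argument on the color classes restricted to $H$. First I would note that for each color $c \in [k]$ the edges of $H$ colored $c$ form a matching of $H$; since $n$ is odd such a matching has at most $(n-1)/2$ edges. Summing over all $k$ colors yields $|E(H)| \le k(n-1)/2$, and the hypothesis that $H$ is $k$-dense makes this an equality. Therefore every color class inside $H$ is a \emph{near-perfect} matching: it covers exactly $n-1$ vertices of $H$ and misses exactly one, which I denote $v_c$.

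The crux is the resulting dichotomy at each vertex. Fix a color $c$. For every $w \in V(H)$ with $w \neq v_c$ there is an edge of $H$ at $w$ colored $c$, and since $\varphi$ is proper it is the unique edge at $w$ of that color; hence $c \notin \phibar(w)$ and no boundary edge at $w$ can carry $c$. Consequently, among the vertices of $H$ the color $c$ may be missing, or may appear on a boundary edge, only at the single vertex $v_c$.

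Elementarity follows at once: if $c \in \phibar(u)$ for some $u \in V(H)$ then $u = v_c$, so $c$ lies in at most one set $\phibar(u)$, giving $\phibar(u) \cap \phibar(v) = \emptyset$ for distinct $u, v \in V(H)$. For the closed and strongly closed conditions I would split on the behavior of $c$ at $v_c$. If $c \in \phibar(v_c)$, then no edge incident with $v_c$ — in particular no boundary edge — is colored $c$, and no boundary edge at any other vertex of $H$ is colored $c$ either; thus $c \notin \varphi(\partial(H))$. Running this over all $c$ shows $\phibar(H) \cap \varphi(\partial(H)) = \emptyset$, which is closedness. If instead $c$ appears on a boundary edge at $v_c$, then properness at $v_c$ together with the exclusion at all other vertices shows this is the only boundary edge colored $c$. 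In both cases each color decorates at most one edge of $\partial(H)$, which is exactly the strongly closed condition.

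I do not expect a genuine obstacle here: the entire argument is driven by the single equality case of the matching bound, after which everything reduces to tracking where a fixed color can occur. The only point demanding care is to keep the three roles of a color at a vertex $v$ cleanly apart — missing from $\phibar(v)$, present on an edge of $H$ at $v$, and present on a boundary edge at $v$ — since the colors absent from the edges of $H$ at $v$ split precisely into the colors missing at $v$ and the colors on boundary edges at $v$.
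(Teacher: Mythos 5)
Your proposal is correct and follows essentially the same route as the paper: the matching bound $|E(H)\cap\varphi^{-1}(c)|\le(|V(H)|-1)/2$ combined with $2|E(H)|=k(|V(H)|-1)$ forces every color class within $H$ to be a near-perfect matching, after which elementarity, closedness, and the strongly closed property all follow by tracking the unique vertex each color misses. The paper's proof is just a terser version of the same argument.
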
	

\begin{proof}
Let $G$, $H$ and $\varphi$ be defined as in the lemma, and let $k = \chi'(G)$. Since $|V(H)|$ is odd,  each color class contains at most   $\frac12 (|V(H)|-1)$ edges. %Since there are $k$ colors, we have $2|E(H)|\leq k(|V(H)|-1)$.
 Note that $2|E(H)|= k(|V(H)|-1)$ as $H$ is $k$-dense. Since there are $k$ colors, each color class within $H$ has exactly $\frac12 (|V(H)|-1)$ edges. Therefore, for each color $\alpha\in\phibar(V(H))$, there exists exactly one vertex $v\in V(H)$ such that $\alpha\in\phibar(v)$, and for each color $\beta\notin\phibar(V(H))$, it is used for exactly one edge in $\partial(V(H))$. Consequently, under the coloring $\varphi$, $H$ is elementary since $\phibar(u)\cap\phibar(v)=\emptyset$ for two different vertices $u,v\in V(H)$, closed since no color in $\phibar(V(H))$ appears on the boundary edges, and strongly closed since no color appears on two distinct boundary edges.
\end{proof}
In our proof we only use the elementary property in Lemma~\ref{elementary}. We hope that in the future by using the strongly closed property, one can extend the total coloring of some $\chi'(G)$-dense subgraph $H$ of $G$ to $G$ as all the edges connecting $H$ with the rest of $G$ are colored differently with colors in $[k]-\phibar(V(H))$.

\section{Proof of Theorem~\ref{thm-main}}
Let $G$ be a graph with  $\chi'(G)\geq \max\{\Delta(G)+2, |V(G)|+1\}$ and $\chi'(G)=k$. 
%We will show that $G$ is a subgraph of a $k$-dense graph $G'$ such that $\chi'(G)=\chi'(G') = k$ and $\D(G') \le k-1$. Then, we observe that $\chi''(G') =k$, which in turn shows that $\chi''(G) = k$ completing the proof.  The proof is divided into three lemmas below. Lemma~\ref{density1} is used to prove Lemma~\ref{3.3}.
The proof of Theorem~\ref{thm-main} is divided into three lemmas in this section. By Lemma~\ref{3.3}, we see that $G$ is a subgraph of a $k$-dense graph $G'$ such that $\chi'(G)=\chi'(G') = k$ and $\D(G') \le k-1$. Then by Lemma~\ref{3.1}, we observe that $\chi''(G') =k$, so Theorem~\ref{thm-main} holds. Lemma~\ref{density1} is used to prove Lemma~\ref{3.3}.

\begin{lemma}\label{3.1}
Let $G$ be a $k$-dense graph. If $\chi'(G) = k$ and $\D(G) \le k-1$, then $\chi''(G) = k$.  
\end{lemma}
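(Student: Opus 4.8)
The plan is to start from an arbitrary $k$-edge-coloring $\phiv$ of $G$, which exists because $\chi'(G)=k$, and to extend it to a total $k$-coloring by assigning each vertex a color from $[k]$. The only conditions to meet are that each vertex's color avoid the colors on its incident edges and differ from the colors of its neighbors. First I would invoke Lemma~\ref{elementary} with $H=G$: since $G$ is $k$-dense and $\chi'(G)=k$, the graph $G$ is itself a $\chi'(G)$-dense subgraph of $G$, so the lemma guarantees that $G$ is elementary under $\phiv$; that is, $\phibar(u)\cap\phibar(v)=\emptyset$ for all distinct $u,v\in V(G)$. Here $G$ has no boundary edges, so the closed and strongly closed conclusions of the lemma are vacuous and only elementariness is relevant, in agreement with the remark following that lemma.

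The key point is that elementariness trivializes the extension. I would define the vertex coloring by choosing, for each $v\in V(G)$, any color $c(v)\in\phibar(v)$. By the definition of $\phibar(v)$, the color $c(v)$ is absent from every edge incident with $v$, so no vertex--edge conflict can arise. Moreover, because the palettes $\phibar(v)$ are pairwise disjoint, distinct vertices automatically receive distinct colors, so $c(u)\neq c(v)$ holds for \emph{every} pair of distinct vertices, in particular for adjacent ones. Hence the pair $(\phiv,c)$ is a legitimate total $k$-coloring, provided each choice $c(v)$ can actually be made.

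It then remains only to check that every palette is nonempty. Since $|\phibar(v)|=k-d_G(v)\ge k-\D(G)\ge 1$ by the hypothesis $\D(G)\le k-1$, each vertex has at least one available color, so the selection above can be carried out. This yields $\chi''(G)\le k$, and combined with the general bound $\chi''(G)\ge\chi'(G)=k$ we conclude $\chi''(G)=k$.

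I do not expect a genuine obstacle here. The usual difficulty in total colorings---forcing adjacent vertices to take different colors while simultaneously avoiding their incident edge colors---simply evaporates, because elementariness makes all the vertex palettes pairwise disjoint and thereby decouples the vertex-coloring constraints entirely. The whole combinatorial content is already packaged into Lemma~\ref{elementary}; the only items to verify are that the lemma is applicable with $H=G$ and that the degree bound $\D(G)\le k-1$ keeps every palette nonempty.
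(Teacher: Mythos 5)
Your proof is correct and follows essentially the same route as the paper: apply Lemma~\ref{elementary} with $H=G$ to get elementariness, use $\D(G)\le k-1$ to ensure each $\phibar(v)$ is nonempty, and let the pairwise disjointness of the palettes handle all vertex--vertex conflicts. The paper's proof is just a terser version of exactly this argument.
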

\begin{proof}
Let $\varphi$ be an edge $k$-coloring of $G$. Since $G$ is $k$-dense, by Lemma~\ref{elementary}, $G$ is elementary under $\varphi$. For each vertex $v$, since $d(v) \le \D(G) < k$, $\phibar(v) \ne \emptyset$. We then assign $v$ a color from $\phibar(v)$. Since $G$ is elementary under $\varphi$, colors assigned to different vertices are distinct. This vertex coloring plus the original edge coloring $\varphi$ gives a total $k$-coloring of $G$. \end{proof}

\begin{lemma}\label{density1}
Let $G$ be a graph with $\chi'(G)=k\geq \Delta(G)+1$ and $H_1$, $H_2$ be two distinct subgraphs of $G$. If both $H_1$ and $H_2$ are  maximal $k$-dense,  then $V(H_1)\cap V(H_2)=\emptyset$. 
\end{lemma}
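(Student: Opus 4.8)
The plan is to fix a $\chi'(G)$-edge-coloring $\varphi$ of $G$ (which exists as $\chi'(G)=k$) and argue by contradiction, assuming two distinct maximal $k$-dense subgraphs $H_1,H_2$ share a vertex. First I would record a normalization: since $\chi'(G)=k$ forces $2|E(G[W])|\le k(|W|-1)$ for every odd $W\subseteq V(G)$ with $|W|\ge 3$ (each color class is a matching), a $k$-dense subgraph $H$ must in fact be induced, $H=G[V(H)]$. Hence each $H_i$ is determined by its vertex set $W_i:=V(H_i)$, and maximality means maximality of $W_i$ under inclusion. By Lemma~\ref{elementary}, both $W_1$ and $W_2$ are elementary and closed under $\varphi$; in particular, for each color $\alpha\in\phibar(W_i)$ there is a unique vertex of $W_i$ missing $\alpha$, and no edge of $\partial(W_i)$ carries a color of $\phibar(W_i)$.

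The natural tool is to uncross the deficiency function $f(S)=k(|S|-1)-2|E(G[S])|$. Since the vertex count is modular and the edge count is supermodular, one obtains $f(W_1\cup W_2)+f(W_1\cap W_2)=f(W_1)+f(W_2)-2e(W_1\setminus W_2,\,W_2\setminus W_1)\le 0$, where $e(\cdot,\cdot)$ counts crossing edges and $f(W_1)=f(W_2)=0$ by $k$-density. If $W_1\cap W_2$ has odd size, then $W_1\cup W_2$ also has odd size; both sets have at least $3$ elements (or $W_1\cap W_2$ has exactly one, where $f$ vanishes trivially), so $f\ge 0$ on each by the density bound, and combined with the displayed inequality this forces $f(W_1\cup W_2)=0$, i.e.\ $W_1\cup W_2$ is $k$-dense. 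As $W_1\neq W_2$ are both maximal, $W_1\cup W_2$ properly contains $W_1$, a contradiction.

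The main obstacle is precisely the parity: a priori $W_1\cap W_2$ could be even, in which case both $W_1\cup W_2$ and $W_1\cap W_2$ are even-sized, $f$ carries no sign there, and the uncrossing step collapses. The crux is therefore a coloring argument ruling out an even intersection, and this is exactly where the hypothesis $k\ge\Delta(G)+1$ enters. Pick $z\in W_1\cap W_2$. Since $d_G(z)\le\Delta(G)\le k-1$, the set $\phibar(z)$ is nonempty; choose $\alpha\in\phibar(z)$. Because $z\in W_i$ and $W_i$ is elementary, $z$ is the unique vertex of $W_i$ missing $\alpha$, and because $W_i$ is closed, $\alpha$ appears on no edge of $\partial(W_i)$. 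Consequently the color class $M_\alpha=\varphi^{-1}(\alpha)$ covers every vertex of $W_i\setminus\{z\}$ by an edge lying inside $W_i$, for $i=1,2$. Thus every vertex of $(W_1\cup W_2)\setminus\{z\}$ is matched by $M_\alpha$ to another vertex of $W_1\cup W_2$, while $z$ stays uncovered: $M_\alpha$ restricts to a near-perfect matching of $W_1\cup W_2$ missing only $z$. Hence $|W_1\cup W_2|$ is odd, so $|W_1\cap W_2|$ is odd as well.

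Combining the two parts finishes the argument: the coloring step puts us in the odd-intersection case, and the uncrossing step then produces a $k$-dense set strictly larger than $W_1$, the desired contradiction. I expect the only delicate points to be the verification that the $M_\alpha$-edges incident to $W_i\setminus\{z\}$ genuinely stay inside $W_i$ (this is exactly the closed property, which forbids such an edge from crossing $\partial(W_i)$), and the small-case bookkeeping when $|W_1\cap W_2|=1$, where $f$ vanishes by definition rather than by the density bound.
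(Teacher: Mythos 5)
Your proof is correct, and while its overall skeleton (split on the parity of the intersection) matches the paper's, the even-intersection case is handled by a genuinely different argument. The paper's odd case is the same counting you phrase as uncrossing the deficiency $f(S)=k(|S|-1)-2|E(G[S])|$: it does inclusion--exclusion on $E(H_1\cup H_2)$ and $E(H_1\cap H_2)$ and plays the result against the density bound and maximality. For an even intersection $H=H_1\cap H_2$, however, the paper stays purely arithmetic: it deletes $V(H)$ from each $H_i$, applies the matching bound $|E(H_i-V(H))|\le k(|V(H_i)|-|V(H)|-1)/2$ to the resulting odd-order subgraphs, and deduces $\sum_{x\in V(H)}d_G(x)\ge k|V(H)|$, contradicting $\Delta(G)\le k-1$. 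You instead invoke Lemma~\ref{elementary}: a color $\alpha\in\phibar(z)$ at a common vertex $z$ (nonempty precisely because $d_G(z)<k$) yields, via the elementary and closed properties, a near-perfect matching of $W_1\cup W_2$ missing only $z$, forcing $|W_1\cup W_2|$ and hence $|W_1\cap W_2|$ to be odd, so the even case never arises and everything reduces to the uncrossing step. Both routes spend the hypothesis $k\ge\Delta(G)+1$ at exactly this point -- yours through $\phibar(z)\ne\emptyset$, the paper's through the degree sum. What each buys: your argument gives a structural explanation of why even intersections are impossible and reuses Lemma~\ref{elementary} (which the paper otherwise needs only for Lemma~\ref{3.1}), while the paper's argument is independent of the coloring machinery, requiring nothing beyond the definition of density. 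Your preliminary normalization that $k$-dense subgraphs are necessarily induced (so that maximality can be read off vertex sets, and distinct maximal dense subgraphs have distinct vertex sets) is a point the paper leaves implicit, and you justify it correctly; your explicit treatment of the degenerate case $|W_1\cap W_2|=1$, where $f$ vanishes by definition rather than by the density bound, likewise patches a small gap the paper glosses over.
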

\begin{proof}
Assume on the contrary that $V(H_1)\cap V(H_2)\neq\emptyset$. For each $i=1,2$, since $H_i$ is $k$-dense, we have $|E(H_i)|=k(|V(H_i)|-1)/2$ and $|V(H_i)|$ is odd. Let  $H=H_1\cap H_2$ and $H^*=H_1\cup H_2$.  By the maximality of $H_1$ and $H_2$, we have $H_1 - H_2 \ne \emptyset \ne H_2 - H_1$, which in turn gives that $H_1\subsetneq H^*$ and $H_2\subsetneq H^*$.  We consider two cases according to the parity of $|V(H)|$. 

\flushleft{\bf Case 1:} $|V(H)|$ is odd. 

Since $E(H^*) = E(H_1)\cup E(H_2)$ and $E(H) = E(H_1)\cap E(H_2)$, the following equality holds. 
\begin{equation}\label{eq1}
|E(H^*)|=|E(H_1)|+|E(H_2)|-|E(H)|= k (|V(H_1)|+|V(H_2)|-2)/2-|E(H)|
\end{equation}
On the other hand, since both $H_1$ and $H_2$ are maximal $k$-dense,  $H^*$ is not $k$-dense, and so the following holds. 
\begin{equation}\label{eq2}
|E(H^*)|<k(|V(H^*)|-1)/2=k(|V(H_1)|+|V(H_2)|-|V(H)|-1)/2
\end{equation}

Combining \ref{eq1} and~\ref{eq2}, we have $|E(H)|> k(|V(H)|-1)/2$. Hence $\chi'(G)\geq\rho(G)\geq\rho(H)>k$, giving a contradiction.

\flushleft{\bf Case 2:} $|V(H)|$ is even. 

Let $H_1^*=H_1-V(H)$ and $H_2^*=H_2-V(H)$. Clearly, both $H_1^*$ and $H_2^*$ have odd number of vertices.   Since both $H_1^*$ and $H_2^*$ are edge $k$-colorable, the following two inequalities hold. 
\begin{equation}\label{eq3}
\begin{split}
&|E(H_1^*)|\leq k (|V(H_1)|-|V(H)|-1)/2\\
&|E(H_1^*)|\leq k (|V(H_2)|-|V(H)|-1)/2.
\end{split}
\end{equation}
Since both $H_1$ and $H_2$ are $k$-dense, we have the following equalities. 
\begin{equation}\label{eq4}
	\begin{split}
	& k(|V(H_1)|-1)/2=|E(H_1)|=|E(H)|+|E(H_1^*)|+|E(H_1^*,H)|\\
	&k (|V(H_2)|-1)/2=|E(H_2)|=|E(H)|+|E(H_2^*)|+|E(H_2^*,H)|
	\end{split}
\end{equation} 
where $E(X,Y)$ denotes the set of edges between $X$ and $Y$.  
The combination of ~\ref{eq3} and~\ref{eq4} gives the following. 
\begin{equation}\label{eq5}
\begin{split}
& |E(H_1^*,H)|+|E(H)|\geq k\cdot  |V(H)|/ 2\\
&|E(H_2^*,H)|+|E(H)|\geq k\cdot |V(H)|/ 2
\end{split}
\end{equation} 
Therefore $\sum_{x\in V(H)}d_G(x)=|E(H_1^*,H)|+|E(H_2^*,H)|+2|E(H)|\geq k|V(H)|$, we have reached a contradiction to $\Delta(G)<k$.
\end{proof}

\begin{lemma}\label{3.3}

If  $G$ is a graph with $\chi'(G)=k \geq \max\{\Delta(G)+2, |V(G)|+1\}$,  then there exists a $k$-dense graph $G'$ containing $G$ as a subgraph such that  $\chi'(G')=k$ and  $\Delta(G')\leq k-1$.
\end{lemma}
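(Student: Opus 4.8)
The plan is to build $G'$ by adding vertices and edges to $G$ while preserving a proper edge $k$-coloring at every stage; since the graph always carries an explicit $k$-edge-coloring and contains $G$, its chromatic index stays exactly $k$, so the only two things to arrange are that the final graph is $k$-dense and has $\Delta \le k-1$. I first record the target in coloring terms. If $H$ has odd order $n$ and a proper $k$-edge-coloring, then each color class is a matching with at most $(n-1)/2$ edges, so $2|E(H)| \le k(n-1)$ always, with equality (i.e. $H$ being $k$-dense) exactly when every color class is a near-perfect matching, that is, when every color is missing at exactly one vertex. Moreover $\Delta(H)\le k-1$ means every vertex misses at least one color, which forces $n\le k$ (the degree sum $k(n-1)\le (k-1)n$). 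Thus the goal is a proper $k$-edge-coloring of an extension of $G$ on an odd order $n\le k$ in which every color is missed exactly once and every vertex misses at least one color.

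First I would pad: since $|V(G)|\le k-1$, add isolated vertices to reach a suitably chosen odd $n$ with $|V(G)|\le n\le k$, noting that a padded vertex misses every color and is therefore maximally flexible. Then I would run an extremal argument. Among all extensions $H\supseteq G$ on $n$ vertices with $\Delta(H)\le k-1$ carrying a proper $k$-coloring, pick one maximizing $|E(H)|$, and suppose for contradiction that it is not $k$-dense. By the paragraph above the total deficiency $\sum_v(k-d_H(v))=kn-2|E(H)|$ then exceeds $k$, so by pigeonhole some color is missed at two distinct vertices. If two vertices missing a common color both have degree $\le k-2$, I simply add that colored edge: this keeps the coloring proper, keeps $\Delta \le k-1$, fixes $n$ and $G\subseteq H$, and raises $|E(H)|$, contradicting maximality. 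The surviving case is that the deficient vertices (those of degree $\le k-2$) have pairwise disjoint sets of missing colors.

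To break this case I would use Kempe-chain recoloring, which leaves the degree sequence and hence the family of extensions unchanged. Given two deficient vertices $s_1,s_2$ with disjoint missing sets, pick $\gamma_i$ missing at $s_i$ and interchange colors along the $\gamma_1\gamma_2$-chain issuing from $s_2$; if that chain avoids $s_1$, the swap makes $s_1$ and $s_2$ share the missing color $\gamma_1$, and then the colored edge $s_1s_2$ can be added, again contradicting maximality. The hard part is precisely the configuration in which, for every pair of deficient vertices and every choice of $\gamma_1,\gamma_2$, the chain joins the two vertices: this rigidity is the signature of a maximal $k$-dense substructure. Here Lemma~\ref{density1} is decisive, since the maximal $k$-dense subgraphs are pairwise vertex-disjoint; it confines the deficient vertices to a single such block while every vertex outside it has degree $k-1$.

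Finally I would exploit the hypotheses $\Delta(G)\le k-2$ and $|V(G)|\le k-1$, which guarantee both spare colors at every original vertex and spare vertices up to the bound $k$. The aim is to choose $n$ so that this rigid block, together with the padded vertices, completes to a coloring in which every color is missed exactly once and every vertex misses at least one color; equivalently, so that the system of missing-color sets admits a system of distinct representatives covering all $n$ vertices, which is a Hall-type condition automatically met once a padded vertex is present in any tight set. Verifying that such an $n$ and the attendant recoloring always succeed, and in particular ruling out the stubborn boundary case of a single deficient vertex of deficiency at least two at order $n=k$, is where the main technical effort lies, and I expect this to be the principal obstacle. Once a $k$-dense $G'$ with $\Delta(G')\le k-1$ and $\chi'(G')=k$ is produced, Lemma~\ref{3.1} immediately yields $\chi''(G')=k$, whence $\chi''(G)=k$ as desired.
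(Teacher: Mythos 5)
Your extremal framework coincides with the paper's: pad to odd order $n\le k$, then take a supergraph of $G$ maximizing $|E|$ subject to $\chi'=k$ and $\Delta\le k-1$, and derive a contradiction from non-density. The genuine gap is in how you dispose of the rigid case. When every Kempe chain between two deficient vertices joins them, you assert that ``this rigidity is the signature of a maximal $k$-dense substructure'' and that Lemma~\ref{density1} then confines the deficient vertices to a single block. But Lemma~\ref{density1} only says that distinct maximal $k$-dense subgraphs are vertex-disjoint; it cannot \emph{produce} a $k$-dense subgraph, and nothing in your argument does. For multigraphs, the inference from Kempe-chain rigidity to the existence of a dense subgraph is not an elementary step --- it is essentially the content of the Goldberg--Seymour conjecture itself, which the paper assumes and invokes at exactly this point: add an edge $e$ between two deficient vertices; edge-maximality forces $\chi'(G'+e)\ge k+1\ge\Delta(G'+e)+2$, so Goldberg--Seymour gives $\rho(G'+e)>k$, i.e.\ an odd-order subgraph $H$ with $2|E(H)|>k(|V(H)|-1)$; since $\rho(G')\le k$, the new edge lies in $H$ and $H-e$ is a $k$-dense subgraph of $G'$ containing both deficient vertices. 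Your Kempe-chain detour should simply be replaced by this device; as written, the crucial step is unproven.

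Second, the configurations you defer are not boundary nuisances but the bulk of the paper's proof. The single deficient vertex of deficiency at least two is handled there in two separate cases: if $d_{G'}(u)=k-2$, a degree-sum parity count kills the subcase $n=k$ and a direct count settles $n<k$; if $d_{G'}(u)\le k-3$, the paper uses $\Delta(G)\le k-2$ to find an edge $e\in E(G'-u)-E(G)$, replaces it by two edges from its ends to $u$ (preserving $G\subseteq G'$ and $\Delta\le k-1$), and again invokes Goldberg--Seymour to force a $k$-dense subgraph through $u$. A similar swap, combined with an edge count showing $|V(F)|>|E(H,F)|$ for $F=G'-V(H)$, resolves the case where the deficient vertices lie in a $k$-dense $H\subsetneq G'$, with Lemma~\ref{density1} supplying the final contradiction. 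None of this machinery appears in your sketch, and the two places you flag as ``the principal obstacle'' are precisely where the proof's content lies, so the proposal as it stands is incomplete.
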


\begin{proof}
Let $G$ and $k$ be defined as stated in the lemma. Notice that  $k = \chi'(G)  \ge \rho(G)$. 	Let $|V(G)|=n$.
If $n$ is even, we add an isolated vertex to $G$. So we may assume  that $n\leq k$ and $n$ is odd. Let $G'$ be a spanning supgraph of $G$ with maximum number of edges such that 
$\chi'(G') = k$ and $\D(G') \le k-1$.  Since each color class is a matching of $G$, we have $\rho(G')\leq k$ and $|E(G')| \leq k(n -1)/2$.  In the remainder of the proof, we will show that $G'$ itself is $k$-dense, i.e., $|E(G')| =k(n-1)/2$.  Let $U = \{ v\in V(G') \ : \ d_{G'}(v) < k-1\}$. We have the following claim for $U$.

\flushleft{\bf Claim 1.}  If $|U|\geq 2$, then there is a $k$-dense subgraph $H$ of $G'$ such that $U\subseteq V(H)$.

Let $u,v$ be two vertices in $U$. We then  add a new edge $e$ between $u$ and $v$ to $G'$ and denote the resulted graph by $G''$.  Note that $\D(G'') \le k-1$. Since $|E(G')|$ is maximum under the conditions that $\chi'(G') = k$ and $\D(G') \le k-1$,  we must have $\chi'(G'') \ge k+1$. By the Goldberg-Seymour conjecture, we have $\lceil\rho(G'')\rceil=k+1$, and therefore $G''$ contains a subgraph $H$ with $2|E(H)| > k(|V(H)| -1) $ and $|V(H)|$ odd.  Since $E(H) - E(G') \subseteq\{e\}$ and $\rho(G') \le k < \rho(H)$, we have $e\in E(H)$ and $2|E(H-e)| = k(|V(H)| -1)$. So, $H-e$ is a $k$-dense subgraph of $G'$ containing both $u$ and $v$. Since $u$ and $v$ are any two vertices in $U$, by Lemma~\ref{density1}, there must exists a unique maximal $k$-dense subgraph of $G'$ containing all vertices of $U$. Thus we have as claimed.

By Claim 1, we then have the following four cases according to the size of $|U|$: $U = \emptyset$; $U = \{u\}$ is a singleton and $d_{G'}(u) = k-2$;  $U =\{u\}$ is a singleton and $d_{G'}(u) \leq k-3$; and there is a $k$-dense subgraph $H$ of $G'$ such that $U\subseteq V(H)$. 

%We will reach a contradiction by considering  the following five cases, which cover all possibilities for $U$.  Note that if $U$ contains at least two vertices and Case 3 is not satisfied, then by Lemma 3.2, all vertices of $U$ are contained in a unique maximal $k$-dense subgraph of $G'$, and therefore we reach Case 4.
%Indeed the case that $U=\emptyset$ is covered by Case 1. For the case that $U=\{v\}$ is a singleton, if $v$ is not contained in a maximal $k$-dense subgraph $H$ such that $U\subseteq V(H)$, then it is covered by either Case 1 or Case 5. If $U=\{v\}$ is a singleton and $v$ is contained in a maximal $k$-dense subgraph $H$ such that $U\subseteq V(H)$, then it satisfies the condition of Case 4. If there are two vertices $u, v\in U$ that do not belong to the same $k$-dense subgraph of $G'$, we have Case 3. If $U$ contains at least two vertices and any two vertices of $U$ are contained together in some $k$-dense subgraph of $G'$, then by Lemma~\ref{density1} there must be a unique maximal $k$-dense subgraph of $G'$ containing all vertices of $U$, reaching Case 4. 

 \flushleft{\bf Case 1.}  $U = \emptyset$. 

In this case, we have $d_{G'}(v)=k-1$ for every $v\in V(G')$.
Since $n \leq k$, we have $\chi'(G')\geq \rho(G') \geq  \frac{2|E(G')|}{|V(G')|-1}=\frac{(k-1)n }{n -1}\geq \frac{(k-1)k}{k-1}=k$ and the equality holds only when $n =k$. Since $\chi'(G)=k$, we have that $G'$ is $k$-dense.
 
 \flushleft {\bf Case 2.} $U = \{u\}$ is a singleton and $d_{G'}(u) = k-2$. 
 
 In this case, we have we have $d_{G'}(v)=k-1$ for every $v\in V(G')$ with $v\neq u$. By counting the total degree, we have the following equality:
 \[
 2|E(G')|=\sum_{v\in V(G')}d_{G'}(v)=(k-1)(n-1)+k-2=k(n-1)+k-1-n.
 \]
  If $n<k$, then we have $2|E(G')|\geq k(n-1)$. Thus $G'$ is $k$-dense since $|E(G')| \leq k(n -1)/2$. If $n =k$, then $k$ is odd and the total degree $\sum_{v\in V(G')}d_{G'}(v)=(k-1)(n-1)+k-2$ is odd, which contradicts the total degree being an even number.

%\flushleft{\bf Case 3.} There exist  two vertices $u, v\in U$ that do not belong to the same $k$-dense subgraph of $G'$.

%In this case we  add a new edge $e$ between $u$ and $v$ to $G'$ and denote the resulted graph by 
%$G''$.  Note that $\D(G'') \le k-1$. We claim that $\chi'(G'')\leq k$. Otherwise, we assume $\chi'(G'') \ge k+1$. By the Goldberg-Seymour conjecture, we have $\lceil\rho(G'')\rceil=k+1$, and therefore $G''$ contains a subgraph $H$ with $2|E(H)| > k(|V(H)| -1) $ and $|V(H)|$ odd.  
%Since $E(H) - E(G') =\{e\}$ and $\rho(G') \le k < \rho(H)$, we have $e\in E(H)$ and
%$2|E(H-e)| = k(|V(H)| -1)$. So, $H-e$ is a $k$-dense subgraph of $G'$ containing both $u$ and $v$, contradicting the assumption of this case.

\flushleft{\bf Case 3.}  There is a $k$-dense subgraph $H$ of $G'$ such that $U\subseteq V(H)$. 
		
In this case we may assume $H$ is a maximal $k$-dense subgraph of $G'$. If $H=G'$, then $G'$ is $k$-dense and we are done. Thus we may assume that $V(G)-V(H)\neq\emptyset$.
Let $F$ be the subgraph of $G'$ induced by $V(G) - V(H)$.  In this case, every vertex outside of $H$ has degree $k-1$ and $|E(H)| = k (|V(H)| -1)/2$. Counting edges in $H$ and $F$, and edges between $H$ and $F$, we have the following inequalities: 
\begin{equation*}
		\begin{split}
		&k(|V(H)|-1)+(k-1)|V(F)|+|E(H,F)|\\
		=&2|E(H)|+2|E(H,G'-H)|+2|E(F)|=2|E(G')|\\
		<&2\cdot k (|V(G')|-1)/2 
		=  k(|V(G')|-1) 
		= k(|V(H)| -1) + k |V(F)|, 
		\end{split}
		\end{equation*} 
where $|E(H,F)|$ denotes the number of edges between $H$ and $F$ in $G$.		
Hence, $|V(F)|>|E(H, F)|$. We also notice that 
 since all vertices in $F$ have degree $k-1>\Delta$ in $G'$,  each vertex in $F$ is incident to an edge not in $G$. Thus, there must be an edge $e_1\in E(F) - E(G)$ as  $|V(F)|>|E(H, F)|$.  Let the two ends of $e_1$ be $x$ and $y$. 
 By the Cases 1 and 2, we may assume that there exists a vertex $v\in V(H)$ such that $d_{G'}(v) \le k-3$ or there exist two distinct vertices $v, v'\in V(H)\cap U$.  Let $G'' = G' -e_1 +e_2 +e_3$, where
 in the former case $e_2,e_3$ are edges between $v$ and $x,y$, respectively;  and  in the later case,  $e_2\in E(v, x)$ and $e_3\in E(v', y)$.   Clearly, $|E(G'')| = |E(G')| +1$ and $\D(G'') \le k-1$. 
 Thus we have $\chi'(G'')=k+1$ by the maximality of $|E(G')|$. 
 
Now by the Goldberg-Seymour conjecture,  there exists a subgraph $H'$ of $G''$ such that $|V(H')|$ is odd and $2|E(H')|>k(|V(H')|-1)$. If  $|\{e_2, e_3\}\cap E(H')| =1$, assume without loss of generality, $e_2\in E(H')$ and $e_3\notin 
 E(H')$. Since $\rho(G')\leq k$, we have that $H'-e_2$ is a $k$-dense subgraph of $G'$. Since $x\notin H$, $x\in H'$ and $v\in H'$, we see that $H$ and another maximal $k$-dense subgraph of $G'$ containing $H'-e_2$ having nonempty intersection, 
 giving a contradiction to   Lemma~\ref{density1}.   If $e_2, e_3\in E(H')$, we similarly see that $H$ and another maximal $k$-dense subgraph of $G'$ containing $H'-e_2-e_3+e_1$ having nonempty intersection, giving a contradiction to
 Lemma~\ref{density1}. Since all possibilities reach contradictions, we must have $H=G'$ and $G'$ is $k$-dense.

\flushleft{\bf Case 4.}  $U =\{u\}$ is a singleton and $d_{G'}(u) \leq k-3$.

Let $F= G' -u$. We first assume that  $E(F) - E(G)=\emptyset$. Since every vertex $v$ in $V(G')$ other than $u$  has degree $k-1>\Delta(G)$, $v$ must have been joined by an edge in $E(G')-E(G)$ to $u$. Thus $d_{G'}(u)\geq n-1$. By counting the total degree, we have the following:
\[
2|E(G')|=(k-1)(n-1)+d_{G'}(u)=k(n-1)+d_{G'}(u)-(n-1)\geq k(n-1)
\]
Since $\chi'(G') =k$, we have $2|E(G')| =k(n-1)$, so $G'$ is $k$-dense. 

We then assume that there is an edge $e\in E(F)-E(G)$. Let the two ends of $e$ be $x$ and $y$, and let $G'' = G'-e +e_2 +e_3$, where $e_2$ and $e_3$ are two edges between $u$ and $x$ and between $u$ and $y$, respectively.   Clearly, $|E(G'')| = |E(G')| +1$ and $\D(G'') = k-1$. Thus we have $\chi'(G'')=k+1$ by the maximality of $|E(G')|$. 
By the Goldberg-Seymour conjecture,  there exists a subgraph $H'$ of $G''$ such that $|V(H')|$ is odd and $2|E(H')|>k(|V(H')|-1)$. Similarly as before, we must have that $H'-e_2-e_3+e$ is $k$-dense subgraph of $G'$ containing $u$. Since $u$ is the only vertex in $U$, we are done by Case 3.

 %Following Case 3, we may assume that $u$ does not belong to any $k$-dense subgraph of $G'$. Similarly as before, since $v$ does not belong to any $k$-dense subgraph of $G'$, we still have $\rho(G'')\leq k$. Therefore we have $\chi'(G'')=k$ by applying the Goldberg-Seymour conjecture, giving a contradiction to the maximality of $|E(G')|$. 

	\end{proof}

\bibliographystyle{amsplain}
\bibliography{total}

\end{document}